\newtheorem{theorem}{Theorem}[section]
\newtheorem{remark}[theorem]{Remark}
\numberwithin{equation}{section}
\begin{document}
\title[The asymptotic behavior of the constants in the Bohnenblust--Hille inequality]{Estimates for the asymptotic behavior of the constants in the Bohnenblust--Hille inequality}
\author[D. Pellegrino \and G. A. Mu\~{n}oz-Fern\'{a}ndez \and J. B. Seoane-Sep\'{u}lveda]{G. A. Mu\~{n}oz-Fern\'{a}ndez\textsuperscript{*} \and D. Pellegrino\textsuperscript{**} \and J. B. Seoane-Sep\'{u}lveda\textsuperscript{*}}
\address{Departamento de An\'{a}lisis Matem\'{a}tico,\newline\indent Facultad de Ciencias Matem\'{a}ticas, \newline\indent Plaza de Ciencias 3, \newline\indent Universidad Complutense de Madrid,\newline\indent Madrid, 28040, Spain.}
\email{gustavo$\_$fernandez@mat.ucm.es}
\address{Departamento de Matem\'{a}tica, \newline\indent Universidade Federal da Para\'{\i}ba, \newline\indent 58.051-900 - Jo\~{a}o Pessoa, Brazil.} \email{pellegrino@pq.cnpq.br}
\address{Departamento de An\'{a}lisis Matem\'{a}tico,\newline\indent Facultad de Ciencias Matem\'{a}ticas, \newline\indent Plaza de Ciencias 3, \newline\indent Universidad Complutense de Madrid,\newline\indent Madrid, 28040, Spain.}
\email{jseoane@mat.ucm.es}
\subjclass[2010]{46G25, 47L22, 47H60.}
\thanks{\textsuperscript{*}Supported by the Spanish Ministry of Science and
Innovation, grant MTM2009-07848.}
\thanks{\textsuperscript{**}Supported by CNPq Grant 620108/2008-8, Edital Casadinho.}
\keywords{Absolutely summing operators, Bohnenblust--Hille Theorem.}

\begin{abstract}
A classical inequality due to H.F. Bohnenblust and E. Hille states that for every positive integer $n$ there is a constant $C_{n}>0$ so that
$$\left(\sum\limits_{i_{1},\dots,i_{n}=1}^{N}\left\vert U(e_{i_{^{1}}}, \dots ,e_{i_{n}})\right\vert^{\frac{2n}{n+1}}\right)^{\frac{n+1}{2n}}\leq C_{n}\left\Vert U\right\Vert$$
for every positive integer $N$ and every $n$-linear mapping $U:\ell_{\infty}^{N}\times\cdots\times\ell_{\infty}^{N}\rightarrow\mathbb{C}$. The original estimates for those constants from Bohnenblust and Hille are $$C_{n}=n^{\frac{n+1}{2n}}2^{\frac{n-1}{2}}.$$ In this note we present explicit formulae for quite better constants, and calculate the asymptotic behavior of these estimates, completing recent results of the second and third authors. For example, we show that,  if $C_{\mathbb{R},n}$ and $C_{\mathbb{C},n}$ denote (respectively) these estimates for the real and complex Bohnenblust--Hille inequality then, for every even positive integer $n$,
$$\frac{C_{\mathbb{R},n}}{\sqrt{\pi}} = \frac{C_{\mathbb{C},n}}{\sqrt{2}}  = 2^{\frac{n+2}{8}}\cdot r_n$$
for a certain sequence $\{r_n\}$ which we estimate numerically to belong to the interval $(1,3/2)$ (the case $n$ odd is similar). Simultaneously, assuming that $\{r_n\}$ is in fact convergent, we also conclude that
$$\displaystyle \lim_{n \rightarrow \infty} \frac{C_{\mathbb{R},n}}{C_{\mathbb{R},n-1}} = \displaystyle \lim_{n \rightarrow \infty} \frac{C_{\mathbb{C},n}}{C_{\mathbb{C},n-1}}= 2^{\frac{1}{8}}.$$
\end{abstract}

\maketitle

\section{Preliminaries and background}

Since Lindenstrauss and Pe\l czy\'{n}ski's classical paper \cite{LP}, the theory of
absolutely summing linear operators became an important topic of research
in Functional Analysis (see \cite{DJT} and references therein). The most
famous constant involved in the theory of absolutely linear operators is the
constant from Grothendieck's fundamental theorem in the metric theory of
tensor products, called Grothendieck's constant $K_{G}$. In recent years,
Grothendieck's type inequalities have received a significant amount of
attention in view of their various applications (see, e. g., \cite{naor,
fish}). Grothendieck's famous Resum\'{e} asks for determining the
precise value of $K_{G}$ (see \cite[Problem 3]{res}, \cite{JJJ} and
references therein). However, this problem remains open despite of the progress
made. For instance, it is well-known that for real scalars
$$K_{G}\leq\frac{\pi}{2\log\left(  1+\sqrt{2}\right)}.$$
For some time it was believed that, in fact, this inequality was sharp, but not until very recently \cite{naor2} it was proved that it is actually not.

In the multilinear theory of absolutely summing operators the key constants
are the constants $C_{n}$ involved in the Bohnenblust--Hille inequality, which
we describe below. In 1930 J.E. Littlewood proved that
\[
\left(  \sum\limits_{i,j=1}^{N}\left\vert U(e_{i},e_{j})\right\vert ^{\frac
{4}{3}}\right)  ^{\frac{3}{4}}\leq\sqrt{2}\left\Vert U\right\Vert
\]
for every bilinear form $U:\ell_{\infty}^{N}\times\ell_{\infty}^{N}%
\rightarrow\mathbb{C}$ and every positive integer $N.$ This is the well-known
Littlewood's $4/3$ inequality \cite{Litt}. Just one year later, H.F.
Bohnenblust and E. Hille observed that Littlewood's inequality had important
connection with Bohr's absolute convergence problem which consists in
determining the maximal width $T$ of the vertical strip in which a Dirichlet
series $%
{\textstyle\sum\limits_{n=1}^{\infty}}
a_{n}n^{-s}$ converges uniformly but not absolutely. Bohnenblust and Hille
proved that $T=1/2$ and for this task they improved Littlewood's $4/3$ by
showing that for all positive integer $n>2$ there is a $C_{n}>0$ so that
\begin{equation}
\left(  \sum\limits_{i_{1},...,i_{n}=1}^{N}\left\vert U(e_{i_{^{1}}%
},...,e_{i_{n}})\right\vert ^{\frac{2n}{n+1}}\right) ^{\frac{n+1}{2n}}\leq
C_{n}\left\Vert U\right\Vert \label{hypp}%
\end{equation}
for all $n$-linear mapping $U:\ell_{\infty}^{N}\times\cdots\times\ell_{\infty
}^{N}\rightarrow\mathbb{C}$ and every positive integer $N.$ In their paper
Bohnenblust and Hille showed that $C_{n}=n^{\frac{n+1}{2n}}2^{\frac{n-1}{2}}$
is a valid constant (for related recent papers we refer the reader to
\cite{ann,defant2}).

This inequality was overlooked for a long time and rediscovered later by A.
Davie \cite{Davie} and S. Kaijser \cite{Ka} and the value of $C_{n}$ was
improved to $C_{n}=2^{\frac{n-1}{2}}.$ Also, H. Qu\'{e}ffelec \cite{Que}, A.
Defant and P. Sevilla-Peris \cite{defant2} observed that $C_{n}=\left(
\frac{2}{\sqrt{\pi}}\right) ^{n-1}$ also works in (\ref{hypp}).

In the recent years considerable effort related to the Bohnenblust--Hille
inequality has been made (see \cite{botpams, ann, defant2, defant} and
references therein) but, as it happens to Grothendieck's constant, there are
still open questions regarding the precise value and behavior of the
Bohnenblust--Hille constants. The questions related to the Bohnenblust--Hille
constants, although up to now less investigated than Grothendieck's constant,
seem at least as challenging as those related to Grothendieck's constant.
Besides the estimation of the precise values of $C_{n}$, their asymptotic
growth is also an open problem. In this note we shall be focusing on the
asymptotic behavior of these constants.

In 2010, A. Defant, D. Popa and U. Schwarting \cite{defant} presented a new
proof of the Bohnenblust--Hille Theorem (also valid for the real case) and by
exploring this proof and estimates from \cite{haag} the second and third
authors of this note obtained better estimates for $C_{n}$ \cite{Preprint_PS}.
However these estimates, for big values of $n$, were obtained recursively and
a closed (non recursive) formula could not be easily obtained as well as the
asymptotic growth of the constants.

In this short note we complement the results from \cite{Preprint_PS} and
provide a closed formula (non recursive) for these better estimates in the
Bohnenblust--Hille inequality. We also determine the asymptotic behavior of
these estimates, showing that if $C_{\mathbb{R},n}$ and $C_{\mathbb{C},n}$
denote (respectively) these constants for the real and complex
Bohnenblust--Hille inequality then:

\begin{enumerate}
\item For every even positive integer $n$,
\begin{equation}
\displaystyle C_{\mathbb{R},n}=\left(  \frac{\sqrt{\pi}}{\sqrt{2}}\right)
C_{\mathbb{C},n}=2^{\frac{n+2}{8}}\cdot r_{n}, \label{pb}%
\end{equation}
for certain sequence $\{r_{n}\}_{n}$, which we estimate numerically to belong
to the interval $(1,3/2)$.

\item If $\mathbb{K}=\mathbb{R}$ or $\mathbb{C}$, then
\[
\lim_{n\rightarrow\infty}\frac{C_{\mathbb{K},n}}{C_{\mathbb{K},n-1}}%
=2^{\frac{1}{8}}.\lim_{n\rightarrow\infty}\frac{r_{n}}{r_{n-1}}.
\]
In particular, if $\{r_{n}\}_{n\in\mathbb{N}}$ is in fact convergent (as our
numerical estimates indicate), then%
\[
\lim_{n\rightarrow\infty}\frac{C_{\mathbb{K},n}}{C_{\mathbb{K},n-1}}%
=2^{\frac{1}{8}}.
\]

\item It worths to be mentioned that for complex scalars (\ref{pb}) can be
replaced by smaller constants:%
\[
C_{\mathbb{C},n}=\frac{2^{\frac{n}{8}+\frac{67}{n}+\frac{3}{4}}}{\pi
^{\frac{36}{n}+\frac{1}{2}}}\cdot\left(  3.9296\times10^{-3}\right)
^{1/n}r_{n}%
\]

\end{enumerate}

\section{Bohnenblust--Hille constants: The real case}

The following result appears in \cite{Preprint_PS}, as a consequence of
results from \cite{defant}:

\begin{theorem}
\label{2_2} For every positive integer $n$ and real Banach spaces
$X_{1},\ldots,X_{n},$
\[
\Pi_{(\frac{2n}{n+1};1)}(X_{1},\ldots,X_{n};\mathbb{R})=\mathcal{L}%
(X_{1},\ldots,X_{n};\mathbb{R})\text{ and }\left\Vert .\right\Vert _{\pi
(\frac{2n}{n+1};1)}\leq C_{\mathbb{R},n}\left\Vert .\right\Vert
\]
with
\[
\label{4estrella0}C_{\mathbb{R},2}=2^{\frac{1}{2}}\text{ and }C_{\mathbb{R}%
,3}=2^{\frac{5}{6}},\newline%
\]
\begin{equation}
\label{4estrella}C_{\mathbb{R},n}=2^{\frac{1}{2}}\left(  \frac{C_{\mathbb{R}%
,n-2}}{A_{\frac{2n-4}{n-1}}^{2}}\right) ^{\frac{n-2}{n}}\text{ for }n>3.
\end{equation}
In particular, if $2\leq n\leq14$,%

\begin{equation}
\label{unaestrella1}C_{\mathbb{R},n}=2^{\frac{n^{2}+6n-8}{8n}}\text{ if
}n\text{ is even, and}%
\end{equation}
\begin{equation}
\label{unaestrella2}C_{\mathbb{R},n}=2^{\frac{n^{2}+6n-7}{8n}}\text{ if
}n\text{ is odd.}%
\end{equation}

\end{theorem}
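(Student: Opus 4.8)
The plan is to assemble three ingredients: a qualitative coincidence coming from \cite{defant}, a two-step recursive norm estimate extracted from that proof together with the sharp bilinear constant and Haagerup's optimal Khinchin constants \cite{haag}, and finally an elementary induction that turns the recursion into the closed formulae \eqref{unaestrella1}--\eqref{unaestrella2}. First I would record the qualitative part. The equality $\Pi_{(\frac{2n}{n+1};1)}(X_1,\ldots,X_n;\mathbb{R})=\mathcal{L}(X_1,\ldots,X_n;\mathbb{R})$ is exactly the real Bohnenblust--Hille phenomenon in its multiple-summing formulation: every continuous $n$-linear form on arbitrary real Banach spaces is multiple $(\frac{2n}{n+1};1)$-summing. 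This is precisely what the proof of Defant, Popa and Schwarting \cite{defant} delivers, and I would simply quote it; the whole remaining task is quantitative, namely extracting the best value of $C_{\mathbb{R},n}$ that their scheme yields once the sharp auxiliary constants are inserted.

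Second, I would derive the recursion \eqref{4estrella}. The mechanism is to peel off a \emph{pair} of variables rather than a single one: given an $n$-linear $U$ with $\|U\|\le 1$, one treats two of its variables as a bilinear block and the remaining $n-2$ as an $(n-2)$-linear block. Applying Littlewood's sharp $4/3$ inequality \cite{Litt} to the bilinear block produces the factor $2^{1/2}$, applying the $(n-2)$-linear Bohnenblust--Hille inequality to the other block produces the factor $C_{\mathbb{R},n-2}$, and a Khinchin inequality reconciles the two exponents; its optimal constant enters at the exponent $\frac{2(n-2)}{n-1}=\frac{2m}{m+1}$ with $m=n-2$, that is, precisely the Bohnenblust--Hille exponent attached to the $(n-2)$-block. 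Hölder's inequality in the interpolation bookkeeping is what raises $C_{\mathbb{R},n-2}/A_{\frac{2(n-2)}{n-1}}^{2}$ to the power $\frac{n-2}{n}$, leaving exactly \eqref{4estrella}. Using the \emph{sharp} bilinear constant $\sqrt{2}$ on two variables at once, rather than peeling single variables, is the source of the improvement of \cite{Preprint_PS} over a naive one-step version of the recursion. The two base cases are handled directly: $C_{\mathbb{R},2}=2^{1/2}$ is the real Littlewood constant, and $C_{\mathbb{R},3}=2^{5/6}$ is a direct trilinear estimate; since the recursion only links $n$ to $n-2$, these two seeds feed the even chain and the odd chain separately.

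Third, I would solve the recursion. The key simplification is Haagerup's formula \cite{haag}: the optimal constant in $\bigl\|\sum_k a_k r_k\bigr\|_{p}\ge A_p\bigl(\sum_k|a_k|^2\bigr)^{1/2}$ satisfies $A_p=2^{\,1/2-1/p}$ for $0<p\le p_0$, where $p_0\approx 1.8474$ is determined by $\Gamma\!\bigl(\tfrac{p_0+1}{2}\bigr)=\tfrac{\sqrt{\pi}}{2}$. Since $\frac{2(n-2)}{n-1}=2-\frac{2}{n-1}$ increases with $n$ and stays below $p_0$ exactly while $n\le 14$, on this whole range $A_{\frac{2(n-2)}{n-1}}=2^{\,1/2-\frac{n-1}{2(n-2)}}=2^{-\frac{1}{2(n-2)}}$, so $A_{\frac{2(n-2)}{n-1}}^{2}=2^{-1/(n-2)}$ is a pure power of $2$. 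Substituting into \eqref{4estrella} collapses it to
\[
C_{\mathbb{R},n}=2^{\frac12+\frac1n}\,C_{\mathbb{R},n-2}^{\frac{n-2}{n}},
\]
so that $c_n:=\log_2 C_{\mathbb{R},n}$ obeys the linear recursion $c_n=\frac12+\frac1n+\frac{n-2}{n}c_{n-2}$ with $c_2=\tfrac12$ and $c_3=\tfrac56$. A one-line induction in steps of two, separately for the even and odd chains, then verifies $c_n=\frac{n^2+6n-8}{8n}$ for even $n$ and $c_n=\frac{n^2+6n-7}{8n}$ for odd $n$, which is \eqref{unaestrella1}--\eqref{unaestrella2}.

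The main obstacle is the second ingredient: making the two-step reduction genuinely output \eqref{4estrella} with the stated exponents and with the sharp factors $\sqrt{2}$ and $A_{\frac{2(n-2)}{n-1}}$, i.e.\ controlling the interpolation and Hölder bookkeeping so that no constant is wasted. The remaining delicate point, which is exactly what forces the restriction $2\le n\le 14$ in \eqref{unaestrella1}--\eqref{unaestrella2}, is the branch of Haagerup's constant: the clean power-of-two closed form is available only while $\frac{2(n-2)}{n-1}\le p_0$, and I would check that this first fails at $n=15$, where the $\Gamma$-branch of $A_p$ takes over and $C_{\mathbb{R},n}$ ceases to be a pure power of $2$.
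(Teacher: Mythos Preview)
The paper does not actually prove Theorem~\ref{2_2}: it is quoted verbatim from \cite{Preprint_PS} (built on \cite{defant} and \cite{haag}), and the present paper only \emph{uses} the recursion \eqref{4estrella} as input for the later closed formulae. So there is no in-paper argument to compare your sketch against; your proposal is in effect a reconstruction of the cited result.

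That said, your reconstruction is on target. Your third ingredient---the reduction of \eqref{4estrella} via Haagerup's identity $A_p=2^{1/2-1/p}$ for $p\le p_0$ to $c_n=\tfrac12+\tfrac1n+\tfrac{n-2}{n}c_{n-2}$ and the induction yielding \eqref{unaestrella1}--\eqref{unaestrella2}---is correct and is exactly why the clean power-of-$2$ formulae stop at $n=14$ (since $\tfrac{2(n-2)}{n-1}$ crosses $p_0\approx 1.8474$ between $n=14$ and $n=15$). Your second ingredient is the only soft spot: the heuristic ``Littlewood on a bilinear block, Bohnenblust--Hille on the remaining $(n-2)$ variables, Khinchin to reconcile exponents'' is the right flavor, but as you yourself flag, turning this into precisely $2^{1/2}\bigl(C_{\mathbb{R},n-2}/A_{\frac{2n-4}{n-1}}^{2}\bigr)^{(n-2)/n}$ with the exact exponents and no loss is the genuine content of \cite{defant,Preprint_PS}, and your sketch does not reproduce that bookkeeping. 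Since the paper itself simply imports \eqref{4estrella} by citation, this is consistent with what the paper does.
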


\noindent The above result and the next remark will be crucial for the main
results in this note.

\begin{remark}
\label{rn} Throughout this paper the sequence $\{r_{n}\}_{n\in2\mathbb{N}}$
given by
\[
\displaystyle r_{n}=\frac{1}{2^{\frac{n-2}{4}}\cdot\left[  \prod_{k=1}%
^{\frac{n-2}{2}}\left(  \frac{\Gamma(\frac{6k+1}{4k+2})}{\sqrt{\pi}}\right)
^{2k+1}\right]  ^{1/n}}=\frac{\pi^{\frac{n^{2}-4}{8n}}}{2^{\frac{n-2}{4}}%
\cdot\left[  \prod_{k=1}^{\frac{n-2}{2}}\left(  \Gamma(\frac{6k+1}%
{4k+2})\right)  ^{2k+1}\right]  ^{1/n}}%
\]
shall appear very often. Although it is not proved here, we have strong
numerical evidence supporting the fact that the above sequence is convergent
and, moreover,
\[
\displaystyle r_{n}\thickapprox1.44.
\]
The interested reader can see here below a table with some of the values of
$r_{n}$, for $n$ even.%

\bigskip

\begin{center}
\begin{tabular}
[c]{r|c}%
$n$ & $r_{n}$\\\hline
10 & 1.28682\\
30 & 1.37516\\
50 & 1.39747\\
100 & 1.41640\\
250 & 1.42943\\
500 & 1.43437\\
1,000 & 1.43707\\
5,000 & 1.43951\\
10,000 & 1.43986\\
15,000 & 1.43998\\
25,000 & 1.44008\\
40,000 & 1.44014\\
100,000 & 1.44021\\
300,000 & 1.44023\\
1,000,000 & 1.44025
\end{tabular}
\end{center}
\end{remark}
\bigskip
As we have mentioned before, Theorem \ref{2_2} does not furnish a closed
formula for the constants of Bohnenblust--Hille inequality for big values of
$n$. The estimates are recursive and makes difficult a complete comprehension
of their growth. Our first result is a closed formula for $C_{\mathbb{R},n}$
with $n$ even:

\begin{theorem}
\label{realrn} If $n$ is an even positive integer, then
\[
C_{\mathbb{R},n}=2^{\frac{n+2}{8}}r_{n}.
\]

\end{theorem}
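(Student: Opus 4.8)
The plan is to prove the closed formula by strong induction on the even integer $n$, unwinding the recursion (\ref{4estrella}) from Theorem~\ref{2_2}. First I would record the base case: for $n=2$ we have $C_{\mathbb{R},2}=2^{1/2}$, and one checks directly from the definition in Remark~\ref{rn} that $r_2$ is an empty product divided by $2^0=1$, hence $r_2=1$, so indeed $C_{\mathbb{R},2}=2^{(2+2)/8}r_2=2^{1/2}$. The inductive step then rewrites (\ref{4estrella}) for even $n>2$ using the induction hypothesis $C_{\mathbb{R},n-2}=2^{\frac{n}{8}}r_{n-2}$ together with the explicit value of the constant $A_{p}$ appearing there; from the reference \cite{haag} (or the earlier discussion) this is $A_{\frac{2n-4}{n-1}}=\sqrt{2}\,\Gamma\!\big(\frac{3n-3}{2n-2}\big)^{\frac{2n-2}{...}}$ — more precisely the relevant quantity is $A_{\frac{2n-4}{n-1}}=\Big(\Gamma\big(\tfrac{6k+1}{4k+2}\big)/\sqrt{\pi}\Big)$-type expression with $k=\frac{n-2}{2}$, which is exactly the factor singled out in the product defining $r_n$.

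Concretely, I would substitute and track the powers of $2$ and the $\Gamma$-factors separately. Writing $k=\frac{n-2}{2}$, the recursion gives
\[
C_{\mathbb{R},n}=2^{\frac12}\left(\frac{2^{\frac{n}{8}}r_{n-2}}{A_{\frac{2n-4}{n-1}}^{2}}\right)^{\frac{n-2}{n}}.
\]
The key algebraic identity to verify is that $r_n$ and $r_{n-2}$ are related by
\[
r_n = \left(r_{n-2}\right)^{\frac{n-2}{n}}\cdot\frac{1}{2^{\frac{1}{2}\cdot\frac{\,?\,}{n}}}\cdot\left(\frac{1}{A_{\frac{2n-4}{n-1}}^{2}}\right)^{\frac{n-2}{n}}\cdot 2^{(\text{power of }2)},
\]
which one reads off directly from the product formula in Remark~\ref{rn}: the product over $k=1,\dots,\frac{n-2}{2}$ differs from the product over $k=1,\dots,\frac{n-4}{2}$ exactly by the single factor with $k=\frac{n-2}{2}$, raised to the power $2k+1=n-1$, and the prefactor $2^{\frac{n-2}{4}}$ changes to $2^{\frac{n-4}{4}}$, i.e. by a factor $2^{1/2}$, while the overall $1/n$-th root becomes a $1/(n-2)$-th root. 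Matching exponents of $2$ on both sides of the desired equation $C_{\mathbb{R},n}=2^{\frac{n+2}{8}}r_n$ then reduces to the arithmetic identity $\frac12+\frac{n-2}{n}\cdot\frac{n}{8}=\frac{n+2}{8}+(\text{the }2\text{-exponent picked up from }r_{n-2}\to r_n)$, and matching the $\Gamma$-factors is immediate once the value of $A_{\frac{2n-4}{n-1}}$ is inserted, since both sides then carry the single factor $\Gamma\big(\frac{6k+1}{4k+2}\big)^{(2k+1)/n}$ in the denominator with $k=\frac{n-2}{2}$.

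As a consistency check I would verify that the resulting closed formula agrees, for $2\le n\le 14$, with the exponent $\frac{n^2+6n-8}{8n}$ given in (\ref{unaestrella1}); this is the statement that on that range $r_n=2^{\frac{n^2+6n-8}{8n}-\frac{n+2}{8}}=2^{-\frac{1}{n}}$, i.e. the $\Gamma$-product telescopes to a pure power of $2$ only in that low range — a useful sanity check that the algebra has been done correctly, and an explanation of why (\ref{unaestrella1})--(\ref{unaestrella2}) were only claimed for small $n$.

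The main obstacle I anticipate is purely bookkeeping rather than conceptual: correctly identifying the precise constant $A_{\frac{2n-4}{n-1}}$ from \cite{haag}/\cite{defant} and its Gamma-function value (getting the argument $\frac{6k+1}{4k+2}$ and the exponent $2k+1$ exactly right), and then carefully combining the nested $\frac{n-2}{n}$-th roots so that the telescoping of the product in $r_n$ is transparent. Once the substitution is set up cleanly, the induction closes by a direct comparison of exponents of $2$ and of each $\Gamma$-factor, with no analytic input needed — the convergence of $\{r_n\}$ and the numerical bound $r_n\in(1,3/2)$ are separate matters not required for this theorem.
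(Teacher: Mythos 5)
Your plan is essentially the paper's own proof: the paper simply unwinds the recursion \eqref{4estrella} explicitly (computing $C_{\mathbb{R},4}, C_{\mathbb{R},6}, C_{\mathbb{R},8},\dots$, collecting the $A$-factors into a telescoping product $s_n$ with $r_n=1/s_n$, and summing the exponents of $2$ to get $2^{\frac{n+2}{8}}$), which is exactly your induction written out iteratively. Your bookkeeping does close: with $k=\frac{n-2}{2}$ one has $A_{\frac{2n-4}{n-1}}^2=2\bigl(\Gamma(\tfrac{6k+1}{4k+2})/\sqrt{\pi}\bigr)^{\frac{2k+1}{2k}}$, so $\bigl(A_{\frac{2n-4}{n-1}}^2\bigr)^{\frac{n-2}{n}}$ contributes precisely the new factor $\bigl(\Gamma(\tfrac{6k+1}{4k+2})/\sqrt{\pi}\bigr)^{\frac{2k+1}{n}}$ distinguishing $r_n$ from $r_{n-2}^{\frac{n-2}{n}}$, and the powers of $2$ combine to $2^{\frac12+\frac{n-2}{8}}=2^{\frac{n+2}{8}}$.

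One warning about your proposed ``consistency check'': it is both miscomputed and would in fact fail. The discrepancy between \eqref{unaestrella1} and the closed formula is $r_n=2^{\frac{n^2+6n-8}{8n}-\frac{n+2}{8}}=2^{\frac{n-2}{2n}}$, not $2^{-1/n}$; and this identity is \emph{false} (e.g.\ $r_{10}\approx 1.287$ while $2^{2/5}\approx 1.320$). The reason is that Haagerup's sharp Khintchine constants have two regimes, $A_p=2^{\frac12-\frac1p}$ for $p$ below $\approx 1.847$ and $A_p=\sqrt{2}\bigl(\Gamma((p+1)/2)/\sqrt{\pi}\bigr)^{1/p}$ above; the formulas \eqref{unaestrella1}--\eqref{unaestrella2} for $n\le 14$ come from the first branch, whereas the proof of this theorem (yours and the paper's alike) inserts the Gamma-branch expression for every $A_{\frac{2n-4}{n-1}}$. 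So do not use agreement with \eqref{unaestrella1} as a sanity check --- it will appear to refute correct algebra --- and be aware that the closed formula $2^{\frac{n+2}{8}}r_n$ is tied to that specific choice of the $A_p$'s. This does not affect the validity of your inductive step, which is the same computation as the paper's.
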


\begin{proof}
Let us begin by noticing that
\[%
\begin{array}
[c]{lcl}%
C_{\mathbb{R},4} & = & \sqrt{2}\left(  \frac{C_{\mathbb{R},2}}{A_{\frac{4}{3}%
}^{2}}\right) ^{\frac{2}{4}},\\
C_{\mathbb{R},6} & = & \sqrt{2}\left[  \frac{\sqrt{2}\left(  \frac
{C_{\mathbb{R},2}}{A_{\frac{4}{3}}^{2}}\right)  ^{\frac{2}{4}}}{A_{\frac{8}%
{5}}^{2}}\right]  ^{\frac{4}{6}}=\frac{\left(  2^{\frac{1}{2}+\frac{1}%
{2}.\frac{4}{6}}\right)  \left(  C_{\mathbb{R},2}\right)  ^{\frac{2}{4}%
.\frac{4}{6}}}{\left(  A_{\frac{4}{3}}^{2}\right)  ^{\frac{2}{4}.\frac{4}{6}%
}\left( A_{\frac{8}{5}}^{2}\right) ^{\frac{4}{6}}},\\
C_{\mathbb{R},8} & = & \sqrt{2}\left[  \frac{\frac{\left( 2^{\frac{1}{2}%
+\frac{1}{2}\cdot\frac{4}{6}}\right)  \left(  C_{\mathbb{R},2}\right)
^{\frac{2}{4}\cdot\frac{4}{6}}}{\left(  A_{\frac{4}{3}}^{2}\right) ^{\frac
{2}{4} \cdot\frac{4}{6}}\left( A_{\frac{8}{5}}^{2}\right)  ^{\frac{4}{6}}}%
}{A_{\frac{12}{7}}^{2}}\right] ^{\frac{6}{8}}=\frac{\left(  2^{\frac{1}%
{2}+\left( \frac{1}{2}+\frac{1}{2}\cdot\frac{4}{6}\right)  \cdot\frac{6}{8}%
}\right)  \left(  C_{\mathbb{R},2}\right) ^{\frac{2}{4} \cdot\frac{4}{6}%
\cdot\frac{6}{8}}}{\left(  A_{\frac{4}{3}}^{2}\right) ^{\frac{2}{4}\cdot
\frac{4}{6}\cdot\frac{6}{8}}\left( A_{\frac{8}{5}}^{2}\right)  ^{\frac{4}%
{6}\cdot\frac{6}{8}}\left(  A_{\frac{12}{7}}^{2}\right) ^{\frac{6}{8}}},
\end{array}
\]
and so on.

Now, using the fact that
\[
A_{p}=\sqrt{2}\left(  \frac{\Gamma((p+1)/2)}{\sqrt{\pi}}\right)  ^{1/p},
\]
we can define
\[%
\begin{array}
[c]{rcl}%
\displaystyle s_{n} & = & \left(  A_{\frac{4}{3}}^{2}\right)  ^{\frac{2}%
{4}\cdot\frac{4}{6}\dots\frac{n-2}{n}}\left(  A_{\frac{8}{5}}^{2}\right)
^{\frac{4}{6}\cdot\frac{6}{8}\dots\frac{n-2}{n}}\left(  A_{\frac{12}{7}}%
^{2}\right)  ^{\frac{6}{8}.\frac{8}{10}\dots\frac{n-2}{n}}\cdot\dots
\cdot\left(  A_{\frac{2n-4}{n-1}}^{2}\right)  ^{\frac{n-2}{n}}=\\
& = & \displaystyle2^{\frac{n-2}{4}}\cdot\left[  \prod_{k=1}^{\frac{n-2}{2}%
}\left(  \frac{\Gamma(\frac{6k+1}{4k+2})}{\sqrt{\pi}}\right)  ^{2k+1}\right]
^{1/n}=\frac{2^{\frac{n-2}{4}}\cdot\left[  \prod_{k=1}^{\frac{n-2}{2}}\left(
\Gamma(\frac{6k+1}{4k+2})\right)  ^{2k+1}\right]  ^{1/n}}{\pi^{\frac{n^{2}%
-4}{8n}}}%
\end{array}
\]
Notice that (see Remark \ref{rn}) $r_{n}=1/s_{n}$. It can be easily checked
that
\begin{equation}
\displaystyle C_{\mathbb{R},n}=2^{\frac{n+2}{8}}\cdot r_{n}.
\end{equation}
Indeed, it suffices with noticing that $C_{\mathbb{R},2}=\sqrt{2}$ and that
\[%
\begin{array}
[c]{lcl}%
\displaystyle C_{\mathbb{R},n}/r_{n} & = & \displaystyle2^{\frac{1}{2}%
+\frac{1}{2}.\frac{\left(  n-2\right)  }{n}+\frac{1}{2}.\frac{\left(
n-4\right)  }{n}+\frac{1}{2}.\frac{\left(  n-6\right)  }{n}+\cdots+\frac{1}%
{2}.\frac{\left(  n-(n-4)\right)  }{n}}.\sqrt{2}^{\frac{2}{n}}\\
& = & \displaystyle2^{\left(  \frac{1}{2}+\frac{1}{2n}\left[  \left(
n-2\right)  +\cdots+4\right]  \right)  +\frac{1}{n}}\\
& = & \displaystyle2^{\frac{n+2}{8}}.
\end{array}
\]

\end{proof}

\begin{remark}
Let us note that, although the previous theorem was proved for $n$ even, a
similar (but not identical) result holds for all $n \in\mathbb{N}$. On the other hand, the
asymptotic behavior of the constants for $n$ odd remains identical. The same
also holds for Theorem \ref{complexrn}.
\end{remark}

Some values of the sequence $\{C_{\mathbb{R},n}\}_{n}$ (using the above
formula for $C_{\mathbb{R},n}$) are shown in the following table.

\bigskip

\begin{center}%
\begin{tabular}
[c]{r|l}%
$n$ & $C_{\mathbb{R},n}$\\\hline
30 & $\thickapprox22 $\\
50 & $\thickapprox126 $\\
100 & $\thickapprox9757$\\
500 & $\thickapprox10^{19}$\\
1,000 & $\thickapprox10^{37}$\\
5,000 & $\thickapprox10^{188}$\\
&
\end{tabular}

\end{center}


The following result will give us the behavior of $\frac{C_{\mathbb{R},n}%
}{C_{\mathbb{R},n-1}}$.

\begin{theorem}
\label{asymptotic_real} For the real case,
\[
\displaystyle\lim_{n\rightarrow\infty}\frac{C_{\mathbb{R},n}}{C_{\mathbb{R}%
,n-2}}=2^{\frac{1}{4}}.\lim_{n\rightarrow\infty}\frac{r_{n}}{r_{n-2}}.
\]
In particular, assuming the convergence of $\{r_{n}\}$,
\[
\displaystyle\lim_{n\rightarrow\infty}\frac{C_{\mathbb{R},n}}{C_{\mathbb{R}%
,n-1}}=2^{\frac{1}{8}}.
\]

\end{theorem}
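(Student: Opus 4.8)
The plan is to read the first identity directly off the closed formula of Theorem~\ref{realrn} and then bootstrap from it to the consecutive ratio. For $n$ even, Theorem~\ref{realrn} gives $C_{\mathbb{R},n}=2^{(n+2)/8}r_n$ and $C_{\mathbb{R},n-2}=2^{n/8}r_{n-2}$, so
\[
\frac{C_{\mathbb{R},n}}{C_{\mathbb{R},n-2}}=2^{\frac{n+2}{8}-\frac{n}{8}}\cdot\frac{r_n}{r_{n-2}}=2^{1/4}\cdot\frac{r_n}{r_{n-2}}
\]
identically, and letting $n\to\infty$ through the even integers yields $\lim\frac{C_{\mathbb{R},n}}{C_{\mathbb{R},n-2}}=2^{1/4}\lim\frac{r_n}{r_{n-2}}$, in the sense that one side converges precisely when the other does. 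For odd $n$ I would first set up the odd analogue of Theorem~\ref{realrn} promised in the Remark after it: telescoping the recursion \eqref{4estrella} down to $C_{\mathbb{R},3}$ in exactly the manner of the proof of Theorem~\ref{realrn}, but now with the factors $A_{3/2},A_{5/3},A_{7/4},\dots$, one gets $C_{\mathbb{R},n}=2^{(n+2)/8+5/(8n)}\widetilde r_n$ for a $\Gamma$-function product $\widetilde r_n$ of the same shape as $r_n$; since $2^{5/(8n)-5/(8(n-2))}\to 1$, the same cancellation again gives $\frac{C_{\mathbb{R},n}}{C_{\mathbb{R},n-2}}\to 2^{1/4}\lim\frac{\widetilde r_n}{\widetilde r_{n-2}}$, so the first assertion of the theorem holds for both parities.

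For the last assertion, assume $\{r_n\}$ converges (and, via the odd analogue, that $\{\widetilde r_n\}$ converges), say to finite nonzero limits $L$ and $L'$. Then $\frac{r_n}{r_{n-2}}\to 1$ and $\frac{\widetilde r_n}{\widetilde r_{n-2}}\to 1$, so by the first part $\frac{C_{\mathbb{R},n}}{C_{\mathbb{R},n-2}}\to 2^{1/4}$ both along the even and along the odd integers. Put $e_n:=\log_2 C_{\mathbb{R},n}-\tfrac{n}{8}$; the two closed formulas give $e_n\to\tfrac14+\log_2 L$ along the evens and $e_n\to\tfrac14+\log_2 L'$ along the odds. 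If these two limits coincide, then $e_n$ converges and hence
\[
\log_2\frac{C_{\mathbb{R},n}}{C_{\mathbb{R},n-1}}=\Bigl(\tfrac{n}{8}+e_n\Bigr)-\Bigl(\tfrac{n-1}{8}+e_{n-1}\Bigr)=\tfrac18+(e_n-e_{n-1})\longrightarrow\tfrac18,
\]
which is exactly $\frac{C_{\mathbb{R},n}}{C_{\mathbb{R},n-1}}\to 2^{1/8}$.

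The one genuine obstacle is the matching $L=L'$ of the even and odd subsequential limits of $e_n$. Knowing only that $\frac{C_{\mathbb{R},n}}{C_{\mathbb{R},n-2}}\to 2^{1/4}$ forces merely $\log_2\frac{C_{\mathbb{R},n}}{C_{\mathbb{R},n-1}}+\log_2\frac{C_{\mathbb{R},n-1}}{C_{\mathbb{R},n-2}}\to\tfrac14$, which a priori still allows the two consecutive ratios to oscillate between $2^{1/8}\,L/L'$ and $2^{1/8}\,L'/L$; excluding this amounts to showing that the product $\prod_k\bigl(\Gamma(\tfrac{6k+1}{4k+2})/\sqrt{\pi}\bigr)^{2k+1}$ governing $r_n$ and the corresponding product governing $\widetilde r_n$ share a common limiting value, which I expect to be as delicate, and at this point as conjectural, as the convergence of $\{r_n\}$ itself, resting on the same asymptotic analysis of $\Gamma$ near $3/2$. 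If one is content with the unconditional but weaker conclusion that any two successive ratios satisfy $\frac{C_{\mathbb{R},n}}{C_{\mathbb{R},n-1}}\cdot\frac{C_{\mathbb{R},n-1}}{C_{\mathbb{R},n-2}}\to 2^{1/4}$, then the first paragraph already suffices and no reconciliation of the parities is required.
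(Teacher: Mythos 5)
Your first paragraph is exactly the paper's argument: the identity
\[
\frac{C_{\mathbb{R},n}}{C_{\mathbb{R},n-2}}=2^{\frac{1}{4}}\cdot\frac{r_{n}}{r_{n-2}}
\]
is read directly off the closed formula of Theorem~\ref{realrn}, and the paper likewise dispatches the odd case with the single sentence that ``the asymptotic behavior for both cases is the same.'' For the second assertion the paper writes
\[
2^{\frac{1}{4}}=\lim_{n\rightarrow\infty}\left(\frac{C_{\mathbb{R},n}}{C_{\mathbb{R},n-1}}\cdot\frac{C_{\mathbb{R},n-1}}{C_{\mathbb{R},n-2}}\right)=\lim_{n\rightarrow\infty}\left(\frac{C_{\mathbb{R},n}}{C_{\mathbb{R},n-1}}\right)^{2},
\]
and the last equality is precisely the step you isolate as the ``one genuine obstacle'': replacing $a_{n}a_{n-1}$ by $a_{n}^{2}$ inside the limit presupposes that the consecutive ratio $a_{n}=C_{\mathbb{R},n}/C_{\mathbb{R},n-1}$ itself converges, which in turn amounts to the even and odd closed formulas having matching limiting constants ($L=L'$ in your notation). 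The paper does not address this point at all; it is absorbed into the unproved claim that the two parities behave identically.

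So your proposal reaches the paper's conclusion by the paper's own route, and the reservation you append at the end is not a defect of your write-up but a genuine gap in the published proof: from convergence of $\{r_{n}\}$ (and of its odd analogue) alone one only gets the unconditional statement $\frac{C_{\mathbb{R},n}}{C_{\mathbb{R},n-1}}\cdot\frac{C_{\mathbb{R},n-1}}{C_{\mathbb{R},n-2}}\rightarrow 2^{\frac{1}{4}}$, exactly as you say; the clean limit $2^{\frac{1}{8}}$ additionally requires the parity reconciliation you describe, or equivalently a reading of the theorem's hypothesis as assuming convergence of the full interlaced sequence. Your treatment of this point is more careful than the paper's.
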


\begin{proof}
We can work for even and odd integers since the asymptotic behavior for both
cases is the same. The first estimate is clear, since%
$$\frac{C_{\mathbb{R},n}}{C_{\mathbb{R},n-2}}=\frac{2^{\frac{n+2}{8}}r_{n}%
}{2^{\frac{n}{8}}r_{n-2}}=2^{\frac{1}{4}}\cdot\frac{r_{n}}{r_{n-2}}.$$
If $\{r_{n}\}$ is convergent we have
$$2^{\frac{1}{4}}=\lim_{n\rightarrow\infty}\frac{C_{\mathbb{R},n}}%
{C_{\mathbb{R},n-2}}=\lim_{n\rightarrow\infty}\left(  \frac{C_{\mathbb{R},n}%
}{C_{\mathbb{R},n-1}}.\frac{C_{\mathbb{R},n-1}}{C_{\mathbb{R},n-2}}\right)
=\lim_{n\rightarrow\infty}\left(  \frac{C_{\mathbb{R},n}}{C_{\mathbb{R},n-1}%
}\right)^{2}$$
and thus,
$$\lim_{n\rightarrow\infty}\frac{C_{\mathbb{R},n}}{C_{\mathbb{R},n-1}}%
=2^{\frac{1}{8}}.$$
\end{proof}


\section{Bohnenblust--Hille constants: The complex case}

The version of Theorem 2.1 for complex scalars is:

\begin{theorem}
For every positive integer $m$ and complex Banach spaces $X_{1}, \dots, X_{m}%
$,
\[
\Pi_{(\frac{2m}{m+1};1)}(X_{1},...,X_{m};\mathbb{C})=\mathcal{L}%
(X_{1},...,X_{m};\mathbb{C})\text{ and }\left\Vert .\right\Vert _{\pi
(\frac{2m}{m+1};1)}\leq C_{\mathbb{C},m}\left\Vert .\right\Vert
\]
with
\begin{align*}
C_{\mathbb{C},m}  &  =\left(  \frac{2}{\sqrt{\pi}}\right)  ^{m-1}\text{ for
}m=2,3,\\
C_{\mathbb{C},m}  &  \leq\frac{2^{\frac{m+2}{2m}}}{\pi^{1/m}}\left(  \frac
{1}{A_{\frac{2m-4}{m-1}}^{2}}\right)  ^{\frac{m-2}{m}}\left(  C_{\mathbb{C}%
,m-2}\right)  ^{\frac{m-2}{m}}\text{ for }m\geq4.
\end{align*}
In particular, if $4\leq m\leq14$ we have%
\[
C_{\mathbb{C},m}\leq\left(  \frac{1}{\pi^{1/m}}\right)  2^{\frac{m+4}{2m}%
}\left(  C_{\mathbb{C},m-2}\right)  ^{\frac{m-2}{m}}.
\]

\end{theorem}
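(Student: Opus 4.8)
The final statement is the complex analogue of Theorem~\ref{2_2}, giving the recursive bound
\[
C_{\mathbb{C},m}\leq\frac{2^{\frac{m+2}{2m}}}{\pi^{1/m}}\left(\frac{1}{A_{\frac{2m-4}{m-1}}^{2}}\right)^{\frac{m-2}{m}}\left(C_{\mathbb{C},m-2}\right)^{\frac{m-2}{m}}
\]
for $m\geq 4$, together with the base cases $C_{\mathbb{C},m}=(2/\sqrt{\pi})^{m-1}$ for $m=2,3$ and the simplified form valid for $4\leq m\leq 14$.

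The plan is to run exactly the same machinery that produced Theorem~\ref{2_2}, now tracking the complex constants. First I would recall the proof of the Bohnenblust--Hille inequality from \cite{defant}: it proceeds by an induction on the degree $n$ that peels off two variables at a time, combining a Khinchin-type inequality on one block of variables with the inductive hypothesis on the complementary block, and then interpolating the resulting mixed-norm estimates via a Minkowski/H\"{o}lder inequality. The only place where the scalar field enters is the constant in the relevant Khinchin inequality: in the real case one uses the optimal Khinchin constants $A_p$ from \cite{haag}, i.e. $A_p=\sqrt{2}\,(\Gamma((p+1)/2)/\sqrt{\pi})^{1/p}$, whereas in the complex case the Steinhaus variables give a better constant, namely $(\Gamma((p+2)/2))^{1/p}$ (equivalently, one gains a factor coming from $A_2^{\mathbb{C}}=1$ rather than $A_2^{\mathbb{R}}=\sqrt{2}$, and more generally the complex Khinchin constant replaces $A_p$). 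Substituting the complex Khinchin constant for $A_p$ in the recursion from \cite{defant, Preprint_PS} and bookkeeping the powers of $2$ and $\pi$ exactly as in the derivation of \eqref{4estrella} produces the displayed inequality; the base cases $m=2,3$ are precisely the Defant--Sevilla-Peris / Qu\'{e}ffelec estimate $C_{\mathbb{C},m}=(2/\sqrt{\pi})^{m-1}$ quoted in Section~1.

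Concretely, the steps in order are: (i) state the mixed-norm inequality coming from one application of the complex Khinchin inequality on a block of $2$ coordinates, which introduces the factor $2^{1/m}$-type gains and a factor $1/A^{\mathbb{C}}$; (ii) apply the induction hypothesis $\|U\|_{\pi(\frac{2(m-2)}{m-1};1)}\leq C_{\mathbb{C},m-2}\|U\|$ to the $(m-2)$-linear piece; (iii) interpolate the two estimates with the exponent $\theta=(m-2)/m$ dictated by the relation $\frac{2m}{m+1}$ between the target exponents at levels $m$ and $m-2$, which is exactly why $C_{\mathbb{C},m-2}$ and $1/A_{\frac{2m-4}{m-1}}^2$ appear raised to the power $\frac{m-2}{m}$; (iv) collect the explicit numerical factor, which works out to $2^{\frac{m+2}{2m}}/\pi^{1/m}$; (v) for $4\leq m\leq14$ bound $A_{\frac{2m-4}{m-1}}^2$ from below (or simply observe the crude estimate that replaces the Khinchin factor by a constant $\leq 2^{1/m}$) to get the simplified recursion with $2^{\frac{m+4}{2m}}$ in place of $2^{\frac{m+2}{2m}}$, absorbing the $A^2$ term.

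The main obstacle — and really the only nonroutine point — is getting the constant in the complex Khinchin/Steinhaus inequality correct and then carrying the exponents through the interpolation without arithmetic slips; everything else is a transcription of the real-case argument. In particular one must be careful that the inductive exponent $\frac{m-2}{m}$ is applied to \emph{both} the previous constant and the Khinchin factor, and that the leftover factor is genuinely $2^{\frac{m+2}{2m}}/\pi^{1/m}$ rather than something off by a power of $\sqrt{2}$ or $\sqrt{\pi}$; checking this against the known values at $m=4$ (and comparing with the real recursion \eqref{4estrella}, where the analogous factor is $\sqrt{2}=2^{1/2}$ instead of $2^{\frac{m+2}{2m}}/\pi^{1/m}$) is the natural sanity check. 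Since the statement is explicitly attributed to being ``the version of Theorem 2.1 for complex scalars'' and the proof of Theorem~\ref{2_2} already appears in \cite{Preprint_PS}, I expect the authors simply to say that the same proof applies \emph{mutatis mutandis}, replacing the real Khinchin constants by the complex ones, and to record the resulting numerics.
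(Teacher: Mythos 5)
The paper offers no proof of this theorem at all: it is introduced with the single line ``The version of Theorem 2.1 for complex scalars is:'' and is imported verbatim from \cite{Preprint_PS} (itself built on \cite{defant}), so your closing prediction that the authors would simply invoke the complex analogue \emph{mutatis mutandis} is exactly what happens. Your sketch of the underlying mechanism --- the two-variables-at-a-time induction of Defant--Popa--Schwarting, with the real Khinchin constants $A_p$ replaced by the complex (Steinhaus) ones and the interpolation exponent $\theta=\tfrac{m-2}{m}$ producing the powers on $C_{\mathbb{C},m-2}$ and $A_{\frac{2m-4}{m-1}}^{-2}$ --- is the route taken in the cited source, so as a description of the proof it is accurate. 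Two caveats: first, the quantitative heart of the statement, namely that the leftover factor is precisely $2^{\frac{m+2}{2m}}/\pi^{1/m}$ and that for $4\le m\le 14$ the Khinchin factor can be absorbed into $2^{\frac{m+4}{2m}}$, is asserted in your step (iv)--(v) rather than computed, and for a constant-tracking result that computation is essentially the whole content; second, your parenthetical that one gains a factor because $A_2^{\mathbb{R}}=\sqrt{2}$ is off --- with Haagerup's normalization $A_p=\sqrt{2}\bigl(\Gamma((p+1)/2)/\sqrt{\pi}\bigr)^{1/p}$ one has $A_2^{\mathbb{R}}=1$, and the complex gain enters instead through the smaller Steinhaus constants for $p<2$ (whence the extra $\pi^{-1/m}$ in the recursion). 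Neither point invalidates the strategy, but both would need to be fixed in a written-out proof.
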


Following \cite[Theorem 3.2]{Preprint_PS}, there exists a sequence
$\{B_{n}\}_{n\in\mathbb{N}}$ such that, for every $m\in\mathbb{N}$,
\[%
\begin{array}
[c]{rcl}%
\displaystyle\frac{C_{\mathbb{C},{n}}}{C_{{\mathbb{C},{n-2}}}^{\frac{n-2}{n}}}
& = & B_{n}\\
&  &
\end{array}
\]
where
\[
B_{n}=\displaystyle\frac{2^{(n+2)/(2n)}}{\pi^{1/n}}\cdot\left(  \frac
{1}{A_{\frac{2n-4}{n-1}}^{2}}\right)  ^{\frac{n-2}{n}}.
\]

Now, making some algebraic manipulations, and keeping in mind that
\[
A_{p} = \sqrt{2} \left(  \frac{\Gamma((p+1)/2)}{\sqrt{\pi}}\right)  ^{1/p},
\]
we have
\[%
\begin{array}
[c]{rcl}%
B_{n} & = & \displaystyle \left(  \frac{\pi}{2}\right)  ^{(n-3)/(2n)}%
\cdot2^{3/(2n)} \cdot\frac{1}{\Gamma\left(  \frac{3n-5}{2n-2}\right)
^{(n-1)/n}}\\
&  &
\end{array}
\]
Now, from the continuity of the Gamma function, together with considering
equivalent infinities, one has that
\[%
\begin{array}
[c]{rcl}%
\displaystyle \lim_{n \rightarrow\infty} B_{n} & = & \displaystyle \lim_{n
\rightarrow\infty} \frac{ \left(  \frac{\pi}{2}\right)  ^{(n-3)/(2n)}
\cdot2^{3/(2n)}}{\Gamma\left(  \frac{3n-5}{2n-2}\right)  ^{(n-1)/n}} =
\displaystyle \frac{\left(  \frac{\pi}{2}\right)  ^{1/2} \cdot2^{0}}%
{\Gamma(3/2)} = \sqrt{2},
\end{array}
\]
using the known fact that $\Gamma(3/2) = \frac{\sqrt{\pi}}{2}$. Thus, we have
shown that
\[
\displaystyle \lim_{n \rightarrow\infty} \left( \displaystyle \frac
{C_{\mathbb{C},{n}}}{C_{{\mathbb{C},{n-2}}}^{\frac{n-2}{n}}}\right)  =
\sqrt{2}.
\]

Our aim now shall be to find a closed expression for the values of
$C_{\mathbb{C},{m}}$ in order to be able to study the asymptotic behavior.

\begin{theorem}
\label{complexrn} If $n$ is an even positive integer, then
\[
C_{\mathbb{C},n}=2^{\frac{n+2}{8}} \cdot\frac{\sqrt{2}}{\sqrt{\pi}} \cdot
r_{n}.
\]

\end{theorem}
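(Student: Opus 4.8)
The plan is to mimic exactly the argument used for the real case in Theorem~\ref{realrn}, tracking the extra factors that come from the complex recursion. Recall that the complex recursion reads
\[
C_{\mathbb{C},n} = \frac{2^{\frac{n+2}{2n}}}{\pi^{1/n}}\left(\frac{1}{A_{\frac{2n-4}{n-1}}^{2}}\right)^{\frac{n-2}{n}}\left(C_{\mathbb{C},n-2}\right)^{\frac{n-2}{n}},
\]
with $C_{\mathbb{C},2} = \tfrac{2}{\sqrt{\pi}}$. First I would unfold this recursion down to $C_{\mathbb{C},2}$, just as the proof of Theorem~\ref{realrn} unfolds the real one, writing $C_{\mathbb{C},n}$ as a product of three groups of factors: (i) the powers of $2$ coming from the numerators $2^{\frac{k+2}{2k}}$ along the chain $k = n, n-2, \dots, 4$; (ii) the powers of $\pi$ coming from the $\pi^{-1/k}$ terms together with the $\sqrt{\pi}$'s hidden in the $A_p$'s and in $C_{\mathbb{C},2}$; (iii) the products of $\Gamma$-values coming from $A_{\frac{2k-4}{k-1}}^{2} = 2\left(\Gamma(\tfrac{6m+1}{4m+2})/\sqrt{\pi}\right)^{2}$ with $m = \tfrac{k-2}{2}$ and the iterated exponents $\tfrac{k-2}{k}\cdot\tfrac{k}{k+2}\cdots\tfrac{n-2}{n}$.

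The key observation is that the $\Gamma$-and-$\pi$ part of this product is, up to an explicit power of $2$, exactly $s_{n} = 1/r_{n}$ as defined in Remark~\ref{rn}: indeed $s_n$ already packages precisely the products $\prod_{m=1}^{(n-2)/2}\left(\Gamma(\tfrac{6m+1}{4m+2})/\sqrt{\pi}\right)^{2m+1}$ that arise, since the telescoping of the exponents $\tfrac{2m}{2m+2}\cdots\tfrac{n-2}{n}$ leaves $\tfrac{2m+1}{n}$, i.e.\ $2m+1$ after raising to the power $1/n$. So the strategy is: factor out $r_n^{-1}$, collect everything else, and show the remainder equals $2^{\frac{n+2}{8}}\cdot\tfrac{\sqrt{2}}{\sqrt{\pi}}$. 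Concretely I would compute $C_{\mathbb{C},n}/(r_n^{-1})$ — equivalently $C_{\mathbb{C},n}\cdot r_n$ — and verify it simplifies to $2^{\frac{n+2}{8}}\sqrt{2}/\sqrt{\pi}$ by an elementary exponent count. The exponent of $2$ should come out to $\tfrac12 + \tfrac1{2n}[(n-2)+(n-4)+\cdots+4] + (\text{corrections})$, mirroring the real computation but with the additional $\tfrac1{2n}$ per step from the $2^{\frac{k+2}{2k}} = 2^{\frac12}\cdot 2^{\frac1k}$ split and the $2^{1}$ from $A_p^2$'s leading constant, and the exponent of $\pi^{-1/2}$ should survive from $C_{\mathbb{C},2}$ after the $\pi$-powers inside $s_n$ are absorbed into $r_n$.

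Alternatively — and this is probably the cleanest route — I would not redo the unfolding from scratch but instead argue by comparison with Theorem~\ref{realrn} directly: show that $C_{\mathbb{C},n} = \lambda_n \cdot C_{\mathbb{R},n}$ where $\lambda_n$ satisfies the induced two-step recursion obtained by dividing the complex recursion by the real one, namely $\lambda_n = (\text{ratio of numerators})\cdot \lambda_{n-2}^{\frac{n-2}{n}}$, with the $A_p^2$ factors cancelling identically. Unfolding that scalar recursion with $\lambda_2 = \tfrac{2/\sqrt{\pi}}{\sqrt{2}} = \sqrt{2}/\sqrt{\pi}$ and checking it is in fact constant in $n$ (equal to $\sqrt{2}/\sqrt{\pi}$) is a short exponent computation, and then $C_{\mathbb{C},n} = \tfrac{\sqrt{2}}{\sqrt{\pi}}C_{\mathbb{R},n} = \tfrac{\sqrt{2}}{\sqrt{\pi}}\cdot 2^{\frac{n+2}{8}}r_n$ follows immediately from Theorem~\ref{realrn}. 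The main obstacle is purely bookkeeping: making sure the $\pi$-exponent from the $\pi^{-1/k}$ terms in the complex recursion (which has no analogue in the real case) exactly cancels when the recursion is telescoped, so that $\lambda_n$ is genuinely independent of $n$; once that cancellation is verified, the rest is routine.
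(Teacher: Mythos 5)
Your proposal is correct. The paper itself omits this proof, saying only that it is ``very similar to that of Theorem~\ref{realrn}'', and your first plan --- unfolding the complex recursion down to $C_{\mathbb{C},2}=2/\sqrt{\pi}$ and matching the $\Gamma$-and-$\pi$ product against $s_n=1/r_n$ --- is exactly that intended argument. Your second route is a genuinely cleaner alternative that the paper does not use, and it does work: dividing the complex step by the real step, the $A_{\frac{2n-4}{n-1}}^{2}$ factors cancel and the ratio of prefactors is $2^{\frac{n+2}{2n}-\frac12}\pi^{-1/n}=(2/\pi)^{1/n}$, so $\lambda_n=C_{\mathbb{C},n}/C_{\mathbb{R},n}$ satisfies $\lambda_n=(2/\pi)^{1/n}\lambda_{n-2}^{\frac{n-2}{n}}$ with $\lambda_2=(2/\pi)^{1/2}$; since $\frac1n+\frac12\cdot\frac{n-2}{n}=\frac12$, induction gives $\lambda_n\equiv\sqrt{2}/\sqrt{\pi}$, and the theorem follows at once from Theorem~\ref{realrn}. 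This comparison argument replaces all the bookkeeping of the direct unfolding by a one-line exponent identity, and it makes transparent \emph{why} the complex and real constants differ by the fixed factor $\sqrt{2}/\sqrt{\pi}$ claimed in \eqref{pb}. The only caution is the one you already flag implicitly: the recursion for $m\ge 4$ is stated in the paper as an inequality, so strictly speaking one is computing the recursively \emph{defined} estimates (as the paper itself does when it writes $C_{\mathbb{C},n}/C_{\mathbb{C},n-2}^{(n-2)/n}=B_n$); treating it as an equality is consistent with the paper's conventions.
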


\noindent The proof is very similar to that of Theorem \ref{realrn} and we
spare the details of it to the interested reader. The case $n$ odd has a very
similar formula. Next, the following result, of identical proof as in Theorem
\ref{asymptotic_real}, is now due:

\begin{theorem}
\label{asymptotic_complex} For the complex case,
\[
\displaystyle\lim_{n\rightarrow\infty}\frac{C_{\mathbb{C},n}}{C_{\mathbb{C}%
,n-2}}=2^{\frac{1}{4}}.\lim_{n\rightarrow\infty}\frac{r_{n}}{r_{n-2}}.
\]
In particular, assuming the convergence of $\{r_{n}\}$,
\[
\displaystyle\lim_{n\rightarrow\infty}\frac{C_{\mathbb{C},n}}{C_{\mathbb{C}%
,n-1}}=2^{\frac{1}{8}}.
\]

\end{theorem}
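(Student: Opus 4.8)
The plan is to mirror the argument already given for \thmref{asymptotic_real}, using the closed formula of \thmref{complexrn} in place of the one from \thmref{realrn}. First I would observe that by \thmref{complexrn}, for $n$ even,
\[
\frac{C_{\mathbb{C},n}}{C_{\mathbb{C},n-2}}
=\frac{2^{\frac{n+2}{8}}\cdot\frac{\sqrt 2}{\sqrt\pi}\cdot r_n}
      {2^{\frac{n}{8}}\cdot\frac{\sqrt 2}{\sqrt\pi}\cdot r_{n-2}}
=2^{\frac14}\cdot\frac{r_n}{r_{n-2}},
\]
since the constant factor $\sqrt2/\sqrt\pi$ cancels exactly as the factor $1$ did in the real case; the same computation applies along the odd integers (invoking the analogous closed formula alluded to after \thmref{complexrn}), and since we are told the asymptotic behavior for $n$ odd is identical, the limit $\lim_n C_{\mathbb{C},n}/C_{\mathbb{C},n-2}$ may be computed along all integers at once. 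This establishes the first displayed equality.

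Next I would deduce the second equality exactly as in \thmref{asymptotic_real}. Assuming $\{r_n\}$ converges, say to $r>0$, we get $\lim_n r_n/r_{n-2}=1$, hence $\lim_n C_{\mathbb{C},n}/C_{\mathbb{C},n-2}=2^{1/4}$. Writing
\[
\frac{C_{\mathbb{C},n}}{C_{\mathbb{C},n-2}}
=\frac{C_{\mathbb{C},n}}{C_{\mathbb{C},n-1}}\cdot\frac{C_{\mathbb{C},n-1}}{C_{\mathbb{C},n-2}},
\]
and noting that the two consecutive ratios on the right have the same limit $L$ (again because the even and odd asymptotics coincide), we obtain $L^2=2^{1/4}$, so $L=2^{1/8}$.

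The only genuine subtlety — and the place where this proof is not quite as trivial as stating ``identical to \thmref{asymptotic_real}'' — is the step asserting that the two consecutive ratios $C_{\mathbb{C},n}/C_{\mathbb{C},n-1}$ and $C_{\mathbb{C},n-1}/C_{\mathbb{C},n-2}$ share a common limit. This is \emph{not} a formal consequence of $\lim_n C_{\mathbb{C},n}/C_{\mathbb{C},n-2}=2^{1/4}$ alone; it relies on the fact (used implicitly in the real case too) that the closed formula holds in the same shape across parities, so that $C_{\mathbb{C},n}/C_{\mathbb{C},n-1}\to L$ as a single sequence over all $n$. Concretely one would want the odd-index analogue of \thmref{complexrn}, say $C_{\mathbb{C},n}=2^{\frac{n+c}{8}}\cdot\frac{\sqrt2}{\sqrt\pi}\cdot \tilde r_n$ with $\tilde r_n$ convergent to the same limit as $r_n$; then $C_{\mathbb{C},n}/C_{\mathbb{C},n-1}$ telescopes the half-step $2^{1/8}$ together with a ratio of (sub)sequences of $\{r_n\}$ tending to $1$. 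I would therefore spell out, or at least explicitly cite, that odd-$n$ formula before invoking the squaring trick. Once that is in place, the remainder is the routine limit manipulation above, and the main obstacle is purely bookkeeping rather than analysis.
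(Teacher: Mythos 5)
Your proposal is correct and follows essentially the same route as the paper: the paper simply declares this theorem to have a proof identical to that of Theorem~\ref{asymptotic_real}, namely cancelling the closed formula of Theorem~\ref{complexrn} to get $2^{1/4}\cdot r_n/r_{n-2}$ and then applying the squaring trick $\lim\bigl(C_{\mathbb{C},n}/C_{\mathbb{C},n-1}\bigr)^2=2^{1/4}$. Your added observation that the squaring step tacitly requires the consecutive ratios to converge to a common limit (i.e.\ the odd-index analogue of the closed formula with the same asymptotics) is a legitimate refinement of a point the paper itself glosses over with the remark that ``the asymptotic behavior for both cases is the same.''
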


\subsection{Some remarks}

The following result was also obtained in \cite{Preprint_PS} as a consequence
of results from \cite{defant}, providing smaller constants for the complex case:

\begin{theorem}
\cite[Theorem 3.2]{Preprint_PS} For every positive integer $n$ and every
complex Banach spaces $X_{1},\ldots,X_{n},$
\[
\Pi_{(\frac{2n}{n+1};1)}(X_{1},\ldots,X_{n};\mathbb{C})=\mathcal{L}%
(X_{1},\ldots,X_{n};\mathbb{C})\text{ and }\left\Vert .\right\Vert _{\pi
(\frac{2n}{n+1};1)}\leq C_{\mathbb{C},n}\left\Vert .\right\Vert
\]
with
\begin{align*}
C_{\mathbb{C},n}  &  =\left(  \frac{2}{\sqrt{\pi}}\right) ^{n-1}\text{ for
}m=2,3,4,5,6,7,\\
C_{\mathbb{C},n}  &  \leq\frac{2^{\frac{n+2}{2n}}}{\pi^{1/n}}\left(  \frac
{1}{A_{\frac{2n-4}{n-1}}^{2}}\right)  ^{\frac{n-2}{n}}\left(  C_{\mathbb{C}%
,n-2}\right)  ^{\frac{n-2}{n}}\text{ for }n\geq8.
\end{align*}
In particular, for $8\leq n\leq14$ we have%
\[
C_{\mathbb{C},n}\leq\left(  \frac{1}{\pi^{1/n}}\right)  2^{\frac{n+4}{2n}%
}\left(  C_{\mathbb{C},n-2}\right)  ^{\frac{n-2}{n}}.
\]

\end{theorem}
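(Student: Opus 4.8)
The plan is to reproduce, in the complex setting, the inductive argument of Defant, Popa and Schwarting \cite{defant}, feeding in at each step the \emph{sharp} constants of the Khinchin inequalities (Haagerup's \cite{haag} for the Rademacher case, and the Steinhaus constant at exponent $1$). For $2\le n\le 7$ the recursion is not needed: here the bound $C_{\mathbb C,n}=\left(2/\sqrt\pi\right)^{n-1}$ of Qu\'effelec \cite{Que} and of Defant--Sevilla-Peris \cite{defant2} is available and is, in that range, smaller than what the recursion below would give; it follows by iterating Littlewood's argument, $2/\sqrt\pi$ being the optimal constant $1/A_1^{\mathbb C}$ in the Steinhaus--Khinchin inequality at exponent $1$. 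These anchor the induction.

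For the recursive step, fix $n\ge4$, an $n$-linear $U\colon\ell_\infty^N\times\dots\times\ell_\infty^N\to\mathbb C$ with $\|U\|\le1$, and write $a_{\mathbf i}=U(e_{i_1},\dots,e_{i_n})$. Following \cite{defant}, single out two of the $n$ coordinates, say the first two, and combine three ingredients. First, a Blei-type mixed-norm inequality (generalized H\"older together with Minkowski) distributing the exponent $\rho_n:=\frac{2n}{n+1}$ between the block of the remaining $n-2$ coordinates, whose natural exponent is $\rho_{n-2}=\frac{2n-4}{n-1}$, and the two distinguished coordinates, whose natural exponent is $2$; the underlying interpolation identity $\frac{1}{\rho_n}=\frac{n-2}{n}\cdot\frac{1}{\rho_{n-2}}+\frac{2}{n}\cdot\frac12$ is precisely what assigns the weight $\frac{n-2}{n}$ below. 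Second, the $(n-2)$-linear complex Bohnenblust--Hille inequality, with constant $C_{\mathbb C,n-2}$, applied for each fixed pair of distinguished indices to the corresponding $(n-2)$-linear slice of $U$; this yields the factor $\left(C_{\mathbb C,n-2}\right)^{(n-2)/n}$. Third, the Khinchin inequality with sharp constants applied to the two distinguished coordinates — using Minkowski to interchange the averages with the inner sums, and $\|U\|\le1$ to bound the averages — the two applications at exponent $\rho_{n-2}$ (Haagerup's constant \cite{haag}) producing $\bigl(A_{\frac{2n-4}{n-1}}^{-2}\bigr)^{(n-2)/n}$, while the exponent-$1$ (complex Steinhaus) step, together with a Littlewood-type step, gives the leading factor $\frac{2^{(n+2)/(2n)}}{\pi^{1/n}}=\sqrt2\,(2/\pi)^{1/n}$, the $\pi$ being the only trace of the complex scalar field here. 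Collecting the constants yields
\[
C_{\mathbb C,n}\le\frac{2^{\frac{n+2}{2n}}}{\pi^{1/n}}\left(\frac{1}{A_{\frac{2n-4}{n-1}}^{2}}\right)^{\frac{n-2}{n}}\left(C_{\mathbb C,n-2}\right)^{\frac{n-2}{n}},
\]
valid for all $n\ge4$ and applied below for $n\ge8$, where it improves on $\left(2/\sqrt\pi\right)^{n-1}$.

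Finally, for $8\le n\le14$ the exponent $p:=\frac{2n-4}{n-1}=2-\frac{2}{n-1}$ stays below the Haagerup threshold $p_0\approx1.8474$ (the root of $\Gamma((p+1)/2)=\sqrt\pi/2$), so Haagerup's theorem gives $A_p=2^{\frac12-\frac1p}$; since $\frac12-\frac1p=-\frac{1}{2(n-2)}$ one obtains $A_{\frac{2n-4}{n-1}}^{-2(n-2)/n}=2^{1/n}$, whence $\frac{2^{(n+2)/(2n)}}{\pi^{1/n}}\bigl(A_{\frac{2n-4}{n-1}}^{-2}\bigr)^{(n-2)/n}=\frac{2^{(n+4)/(2n)}}{\pi^{1/n}}$, which is the asserted simplification (for $n\ge15$ one is in the other branch $A_p=\sqrt2\,(\Gamma((p+1)/2)/\sqrt\pi)^{1/p}$, the form used in \thmref{complexrn}). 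I expect the main obstacle to be the recursive inequality itself: carrying out the Defant--Popa--Schwarting mixed-norm and Minkowski manipulations so that every exponent lines up exactly, and handling the Khinchin steps with the correct sharp constants — the genuinely delicate point being that $\frac{2n-4}{n-1}$ crosses the Haagerup threshold precisely between $n=14$ and $n=15$, which is what separates the two regimes in the statement and, in the companion results, ultimately produces the $2^{1/8}$ asymptotics.
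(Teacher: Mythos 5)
The paper offers no proof of this statement: it is imported verbatim from \cite[Theorem 3.2]{Preprint_PS}, so there is no in-paper argument to compare against. Measured against what the cited source actually does, your outline follows the right route --- anchor the cases $n\le 7$ with the constant $(2/\sqrt{\pi})^{n-1}$ of Qu\'effelec \cite{Que} and Defant--Sevilla-Peris \cite{defant2}, run the Defant--Popa--Schwarting two-coordinate splitting \cite{defant} with Haagerup's sharp Khinchin constants \cite{haag} for $n\ge 8$, and simplify using the branch $A_p=2^{\frac12-\frac1p}$ of Haagerup's theorem. Your verification of the ``in particular'' clause is the one part that is carried out in full, and it is correct: $\frac{2n-4}{n-1}=2-\frac{2}{n-1}$ lies below the threshold $p_0\approx 1.8474$ exactly for $n\le 14$ (it crosses between $n=14$ and $n=15$, as you say), and $\bigl(A_{\frac{2n-4}{n-1}}^{-2}\bigr)^{(n-2)/n}=2^{1/n}$ turns the exponent $\frac{n+2}{2n}$ into $\frac{n+4}{2n}$.

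As a self-contained proof, however, the central item is missing. The recursion $C_{\mathbb C,n}\le \frac{2^{(n+2)/(2n)}}{\pi^{1/n}}\bigl(A_{\frac{2n-4}{n-1}}^{-2}\bigr)^{\frac{n-2}{n}}\bigl(C_{\mathbb C,n-2}\bigr)^{\frac{n-2}{n}}$ is the entire content of the theorem for $n\ge8$, and your derivation of it stops at ``collecting the constants yields'': the Blei/Minkowski exponent bookkeeping and, above all, the provenance of the prefactor $\frac{2^{(n+2)/(2n)}}{\pi^{1/n}}=\sqrt2\,(2/\pi)^{1/n}$ are asserted rather than computed. (A sanity check that your description is at least consistent: this prefactor equals the real-case prefactor $\sqrt2$ times $\bigl((2/\sqrt{\pi})/\sqrt2\bigr)^{2/n}$, i.e.\ the ratio of the complex to the real bilinear constant raised to the weight $2/n$, which is where the lone $\pi^{1/n}$ comes from.) Likewise, the claim that $(2/\sqrt{\pi})^{n-1}$ is smaller than what the recursion produces for $n\le 7$ and larger for $n\ge 8$ --- the fact that justifies the case split at $n=8$ rather than at $n=4$ --- is stated without any comparison. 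None of these steps looks likely to fail, and you have correctly located where the difficulty sits, but they constitute the proof rather than its periphery; as written, the proposal is an accurate road map with the destination penciled in.
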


By using the constants above we can obtain a closed formula with smaller
constants for the complex case. But, these new constants have the same
asymptotic behavior of the previous. It can be checked that
\[
C_{\mathbb{C},{14}}=\frac{2^{30/7}}{\pi^{19/14}}.
\]
In a similar fashion as the calculations we made for the real case in Theorem
2.3, it can be seen (we spare the details to the reader) that, for even values
of $n\geq16$:
\begin{equation}
C_{\mathbb{C},{n}}=\frac{2^{\frac{n}{8}+\frac{67}{n}+\frac{3}{4}}}{\pi
^{\frac{36}{n}+\frac{1}{2}}}\cdot\left(  \prod_{k=1}^{6}\Gamma\left(
\frac{6k+1}{4k+2}\right)  ^{2k+1}\right)  ^{1/n}r_{n}. \label{complex_even}%
\end{equation}
Evaluating $\prod_{k=1}^{6}\Gamma\left(  \frac{6k+1}{4k+2}\right)  ^{2k+1}$ we
obtain
\[
C_{\mathbb{C},{n}}=\frac{2^{\frac{n}{8}+\frac{67}{n}+\frac{3}{4}}}{\pi
^{\frac{36}{n}+\frac{1}{2}}}\cdot\sqrt[n]{0.003929571803} \cdot r_{n}%
\]

Of course, a similar procedure can be made for odd values of $n$.
\bigskip

\textbf{Acknowledgement.} This note was written while the second named author
was visiting the Facultad de Ciencias Matem\'{a}ticas de la Universidad
Complutense de Madrid. He thanks Prof. Seoane-Sep\'{u}lveda and the members of
the Facultad for their kind hospitality.

\begin{bibdiv}
\begin{biblist}

\bib{naor}{article}{
   author={Alon, N.},
   author={Makarychev, K.},
   author={Makarychev, Y.},
   author={Naor, A.},
   title={Quadratic forms on graphs},
   journal={Inventiones Math.},
   volume={163},
   date={2006},
   pages={499--522},
}

\bib{naor2}{article}{
   author={Braverman, M.},
   author={Makarychev, K.},
   author={Makarychev, Y.},
   author={Naor, A.},
   title={The Grothendiek constant is strictly smaller than Krivine's bound},
   journal={arXiv:1103.6161v2},
   volume={},
   date={2011},
   pages={},
}

\bib{botpams}{article}{
   author={Botelho, G.},
   author={Braunss, H.-A.},
   author={Junek, H.},
   author={Pellegrino, D.},
   title={Inclusions and coincidences for multiple summing multilinear mappings},
   journal={Proc. Amer. Math. Soc.},
   volume={137},
   date={2009},
   pages={991--1000},
}

\bib{bh}{article}{
   author={Bohnenblust, H. F.},
   author={Hille, Einar},
   title={On the absolute convergence of Dirichlet series},
   journal={Ann. of Math. (2)},
   volume={32},
   date={1931},
   number={},
   pages={600--622},
}

\bib{Davie}{article}{
   author={Davie, A. M.},
   title={Quotient algebras of uniform algebras},
   journal={J. London Math. Soc. (2)},
   volume={7},
   date={1973},
   pages={31--40},
}

\bib{ann}{article}{
   author={Defant, A.},
   author={Frerick, L.},
   author={Ortega-Cerd\`{a}, J.},
   author={Ouna\"{\i}es, M.},
   author={Seip, K.},
   title={The Bohnenblust--Hille inequality for homogeneous polynomials is hypercontractive},
   journal={Ann. of Math. (2)},
   volume={174},
   date={2011},
   pages={485--497},
}

\bib{defant2}{article}{
   author={Defant, Andreas},
   author={Sevilla-Peris, Pablo},
   title={A new multilinear insight on Littlewood's 4/3-inequality},
   journal={J. Funct. Anal.},
   volume={256},
   date={2009},
   number={},
   pages={1642--1664},
}

\bib{defant}{article}{
   author={Defant, Andreas},
   author={Popa, Dumitru},
   author={Schwarting, Ursula},
   title={Coordinatewise multiple summing operators in Banach spaces},
   journal={J. Funct. Anal.},
   volume={259},
   date={2010},
   number={},
   pages={220--242},
}

\bib{JJJ}{article}{
   author={Diestel, J.},
   author={Fourie, J.},
   author={Swart, J.},
   title={The metric theory of tensor products},
   journal={American Mathematical Society},
   volume={},
   date={2008},
   pages={},
}

\bib{DJT}{article}{
   author={Diestel, J.},
   author={Jarchow, H.},
   author={Tonge, A.},
   title={Absolutely summing operators},
   journal={Cambridge Studies in Advanced Mathematics},
   volume={},
   date={1995},
   pages={},
}

\bib{fish}{article}{
   author={Fishburn, P.C.},
   author={Reeds, J.A.},
   title={Bell inequalities, Grothendieck's constant, and root two},
   journal={SIAM J. Disc. Math.},
   volume={7},
   date={1994},
   pages={48--56},
}

\bib{res}{article}{
   author={Grothendieck, A.},
  title={R\'{e}ésum\'{e} de la th\'{e}orie m\'{e}trique des produits
tensoriels topologiques},
   journal={Bol. Soc. Mat. Sao Paulo},
   volume={8},
   date={1953--1956},
   number={},
   pages={1--79},
}

\bib{haag}{article}{
   author={Haagerup, Uffe},
   title={The best constants in the Khintchine inequality},
   journal={Studia Math.},
   volume={70},
   date={1982},
   number={},
   pages={231--283},
}

\bib{Ka}{article}{
   author={Kaijser, Sten},
   title={Some results in the metric theory of tensor products},
   journal={Studia Math.},
   volume={63},
   date={1978},
   number={},
   pages={157--170},
}

\bib{LP}{article}{
   author={Lindenstrauss, J.},
   author={ Pe\l czy\'{n}ski, A.},
   title={Absolutely summing operators in $L_{p}$ spaces and their applications},
   journal={Studia Math.},
   volume={29},
   date={1968},
   pages={276--326},
}

\bib{Litt}{article}{
   author={Littlewood, J.E.},
   title={On bounded bilinear forms in an infinite number of variables},
   journal={Q. J. Math.},
   volume={1},
   date={1930},
   pages={164--174},
}

\bib{Preprint_PS}{article}{
author={Pellegrino, D.},
author={Seoane-Sep\'{u}lveda, J. B.},
title = {Improving the constants for real and complex Bohnenblust--Hille inequality},
status={Preprint, arXiv 1010.0461v2, October 2010},
}

\bib{Que}{article}{
   author={Queff\'{e}lec, H.},
   title={H. Bohr's vision of ordinary Dirichlet series; old and new
   results},
   journal={J. Anal.},
   volume={3},
   date={1995},
   pages={43--60},
}

\end{biblist}
\end{bibdiv}

\end{document}